\theoremstyle{plain}
\newtheorem{theorem}{Theorem}[section]
\newtheorem{lemma}[theorem]{Lemma}
\theoremstyle{definition}
\newtheorem{definition}[theorem]{Definition}
\newtheorem{example}[theorem]{Example}
\theoremstyle{remark}
\newtheorem{remark}[theorem]{Remark}
\newcommand{\A}{(\mathcal{A})}
\newcommand{\arcs}{\stackrel{\leftrightarrow}{a}\!} % number of arcs
\newcommand{\F}{\mathcal{F}}
\numberwithin{equation}{section}
\begin{document}

\title[A condition for hamiltonicity in balanced bipartite digraphs]{A degree sum condition for hamiltonicity in balanced bipartite digraphs}
\author{Janusz Adamus}
\address{J.Adamus, Department of Mathematics, The University of Western Ontario, London, Ontario N6A 5B7 Canada}
\email{jadamus@uwo.ca}
\thanks{The research was partially supported by Natural Sciences and Engineering Research Council of Canada.}
\subjclass[2010]{05C20, 05C38, 05C45}
\keywords{digraph, bipartite digraph, cycle, hamiltonicity, degree condition}

\begin{abstract}
We prove that a strongly connected balanced bipartite digraph $D$ of order $2a$ is hamiltonian, provided $a\geq3$ and $d(x)+d(y)\geq 3a$ for every pair of vertices $x,y$ with a common in-neighbour or a common out-neighbour in $D$.
\end{abstract}
\maketitle

%%%%%%%%%%%%%%%%%%%%%%%%%%%%%%%%%%%%%%%%%%%%%%%%%%
%%%%%%%%%%%%%%%%%%%%%%%%%%%%%%%%%%%%%%%%%%%%%%%%%%

\section{Introduction}
\label{sec:intro}

In \cite{BGL}, Bang-Jensen et al. conjectured the following strengthening of a classical Meyniel theorem: If $D$ is a strongly connected digraph on $n$ vertices in which $d(u)+d(v)\geq2n-1$ for every pair of non-adjacent vertices $u,v$ with a common out-neighbour or a common in-neighbour, then $D$ is hamiltonian. 
(An \emph{in-neighbour} (resp. \emph{out-neighbour}) of a vertex $u$ is any vertex $v$ such that $vu\in A(D)$ (resp. $uv\in A(D)$).)

The conjecture has been partially verified under additional assumptions in \cite{BGY}, but has remained in its full generality a difficult open problem. The goal of the present note is to prove a bipartite analogue of the conjecture (Theorem~\ref{thm:main} below).

We consider digraphs in the sense of \cite{BG}, and use standard graph theoretical terminology and notation (see Section~\ref{sec:not} for details).

\begin{definition}
\label{def:3a}
Consider a balanced bipartite digraph $D$ with partite sets of cardinalities $a$. We will say that $D$ satisfies \emph{condition $\A$} when
\[
d(x)+d(y)\geq 3a
\]
for every pair of vertices $x,y$ with a common in-neighbour or a common out-neighbour.
\end{definition}

\begin{theorem}
\label{thm:main}
Let $D$ be a strongly connected balanced bipartite digraph with partite sets of cardinalities $a$, where $a\geq3$.
If $D$ satisfies condition $\A$, then $D$ is hamiltonian.\\
Moreover, the only non-hamiltonian strongly connected balanced bipartite digraph on $4$ vertices which satisfies condition $\A$ is the one obtained from the complete bipartite digraph $\stackrel{\leftrightarrow}{K}_{2,2}$ by removing one $2$-cycle.
\end{theorem}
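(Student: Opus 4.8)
My plan is to treat the two assertions separately: the hamiltonicity for $a\ge 3$ by a longest-cycle contradiction, and the ``moreover'' claim for $a=2$ by a direct finite analysis. For $a\ge 3$, suppose $D$ is strongly connected, satisfies condition~$\A$, and yet is not hamiltonian. I would fix a longest cycle $C=x_1x_2\cdots x_{2k}x_1$; since $C$ alternates between the partite sets it has even length $2k$, and non-hamiltonicity gives $k<a$, so $R:=V(D)\setminus V(C)$ is non-empty with $a-k\ge 1$ vertices on each side. The first things I would record are the elementary degree facts: every vertex has all of its neighbours in the opposite partite set, so $d(v)=d^+(v)+d^-(v)\le 2a$ for all $v$; consequently, whenever condition~$\A$ applies to a pair containing $v$ we get $d(v)\ge 3a-2a=a$. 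I would then use strong connectivity to locate, inside $R$, the interior $P=u_1u_2\cdots u_p$ of a longest ear of $C$, i.e.\ a longest path in $D[R]$ for which there exist $x_s,x_t\in V(C)$ with $x_s\to u_1$ and $u_p\to x_t$. By maximality of the ear, every in-neighbour of $u_1$ inside $R$ lies on $P$ and every out-neighbour of $u_p$ inside $R$ lies on $P$.

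The engine of the argument is an insertion lemma together with condition~$\A$. If $x_i\to u_1$ and $u_p\to x_{i+1}$ for some pair of consecutive cycle vertices, then replacing the arc $x_ix_{i+1}$ by the detour $x_i\to u_1\to\cdots\to u_p\to x_{i+1}$ yields a cycle strictly longer than $C$, a contradiction; note that the two arcs automatically respect the bipartition, so no separate parity hypothesis is needed. Hence, writing $A=N^-(u_1)\cap V(C)$ and $B=N^+(u_p)\cap V(C)$ (both non-empty, since $x_s\in A$ and $x_t\in B$), the successor on $C$ of any vertex of $A$ must avoid $B$. On the other hand, for $x_i\in A$ the cycle arc $x_i\to x_{i+1}$ makes $x_i$ a common in-neighbour of $u_1$ and $x_{i+1}$, so condition~$\A$ gives $d(u_1)+d(x_{i+1})\ge 3a$; dually, for $x_j\in B$ the arc $x_{j-1}\to x_j$ makes $x_j$ a common out-neighbour of $u_p$ and $x_{j-1}$, so $d(u_p)+d(x_{j-1})\ge 3a$.

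I would then combine the shift restriction with these degree inequalities, summed over the cyclic positions, to bound the number of arcs between $\{u_1,u_p\}$ and $C$ both from above (by $2k$, via the disjointness of the shifted neighbour sets) and from below (via $d(u_1),d(u_p)\ge a$ and the $2a$ cap), and thereby to reach a contradiction. I expect this counting to be the principal obstacle: it is where the exact constant $3a$ and the hypothesis $a\ge 3$ are consumed, and it requires careful bookkeeping of the cyclic intervals of neighbours of a single outside vertex, the degenerate case $p=1$ (so $u_1=u_p$), and the borderline regime in which $k$ is close to $a$. I anticipate needing an auxiliary lemma asserting that the neighbour sets of $u_1$ and of $u_p$ on $C$ occupy cyclic intervals which cannot overlap once shifted, so that the two estimates become genuinely incompatible.

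Finally, for the ``moreover'' statement I would argue by direct enumeration when $a=2$. With partite sets $\{x_1,x_2\}$ and $\{y_1,y_2\}$ only the eight cross arcs are available, and since two vertices on opposite sides never share a neighbour in a bipartite digraph, condition~$\A$ constrains only the two same-side pairs, each demanding degree sum at least $6$. A short check over the strongly connected digraphs meeting this requirement shows that every one of them contains one of the two alternating $4$-cycles $x_1y_1x_2y_2x_1$ or $x_1y_2x_2y_1x_1$, and is therefore hamiltonian, with the single exception of $\stackrel{\leftrightarrow}{K}_{2,2}$ with one $2$-cycle deleted; the latter is strong, meets condition~$\A$ with equality on both same-side pairs, and has no hamiltonian cycle. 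This isolates the stated unique exception and simultaneously shows why the hypothesis $a\ge 3$ is necessary.
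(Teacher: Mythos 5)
Your treatment of the main case ($a\ge 3$) is a plan rather than a proof, and it fails at two concrete points. First, the engine you build is vacuous precisely in the case you set aside as degenerate. If the ear $P=u_1\cdots u_p$ has odd order (in particular $p=1$, so $u_1=u_p$), then for consecutive cycle vertices $x_i,x_{i+1}$ the arcs $x_i\to u_1$ and $u_p\to x_{i+1}$ can never coexist, for parity reasons: $x_i\to u_1$ forces $u_1$ into the partite set opposite to $x_i$, while $u_p\to x_{i+1}$ forces $u_p$ into the partite set of $x_i$, and for odd $p$ the vertices $u_1$ and $u_p$ lie on the same side. Consequently the disjointness of $B=N^+(u_p)\cap V(C)$ from the shifted set of $A=N^-(u_1)\cap V(C)$ holds automatically by bipartiteness and carries no combinatorial information whatsoever; no bookkeeping with these sets can produce a contradiction when $p$ is odd. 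Odd ears would require a different insertion (e.g.\ replacing a two-arc segment $x_i\to x_{i+1}\to x_{i+2}$ of $C$ by the path through $P$), with its own case analysis, none of which appears in your plan. Second, the counting you defer is exactly the hard part, and the inputs you propose for it are unavailable: the lower bounds $d(u_1),d(u_p)\ge a$ require knowing that condition $\A$ applies to some pair containing $u_1$ (resp.\ $u_p$), which you have not established --- the paper proves precisely this as Lemma~\ref{lem:2}, but its proof runs through the existence of a cycle factor (Lemma~\ref{lem:1}, via K\"onig--Hall), an ingredient entirely absent from your approach. Likewise, the auxiliary lemma you anticipate, that the neighbour sets of $u_1$ and $u_p$ on $C$ form cyclic intervals, is unjustified: condition $\A$ imposes no such interval structure. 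Since Example~\ref{ex:AM} shows the constant $3a$ is sharp, any count that closes the argument must be tight everywhere; it cannot be left as an expectation.

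For contrast, the paper never works with a longest cycle. It extracts a cycle factor from condition $\A$ via K\"onig--Hall, takes a factor with the minimum number of cycles, proves the bound $\arcs(V(C_i),V(C_j))\le |C_i|\cdot|C_j|/2$ between any two cycles of such a factor, and then reaches a contradiction by merging cycles, using Lemmas~\ref{lem:2} and~\ref{lem:3} (whose proofs themselves rest on the cycle factor). The cycle factor is the structural input that makes the counting close; your longest-cycle/ear framework lacks it, and that lack is not a matter of missing details but of a missing idea. Your $a=2$ enumeration is a legitimate finite check in principle, though you assert rather than perform it; the paper obtains the same characterization directly from the cycle factor together with strong connectedness.
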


\begin{remark}
\label{rem:sharp}
Although in light of the above mentioned conjecture one might expect something of order $2a$, it is worth noting that the bound of $3a$ in Theorem~\ref{thm:main} is sharp. Indeed, this follows from Example~\ref{ex:AM} below (due to Amar and Manoussakis \cite{AM}).
\end{remark}

\begin{example}
\label{ex:AM}
For $a\geq 3$ and $1\leq l<a/2$, let $D(a,l)$ be a bipartite digraph with partite sets $V_1$ and $V_2$ such that $V_1$ (resp. $V_2$) is a disjoint union of sets $R,S$ (resp. $U,W$) with $|R|=|U|=l$, $|S|=|W|=a-l$, and $A(D(a,l))$ consists of the following arcs:\\
(a) $ry$ and $yr$, for all $r\in R$ and $y\in V_2$,\\
(b) $ux$ and $xu$, for all $u\in U$ and $x\in V_1$, and\\
(c) $sw$, for all $s\in S$ and $w\in W$.\\
Then $d(r)=d(u)=2a$ for all $r\in R$ and $u\in U$, and $d(s)=d(w)=a+l$ for all $s\in S$ and $w\in W$. In particular, for odd $a$, in $D(a,(a-1)/2)$ we have $d(x)+d(y)\geq3a-1$ for every pair of non-adjacent vertices $x,y$.
Notice that $D(a,l)$ is strongly connected, but not hamiltonian.
\end{example}

A weaker version of Theorem~\ref{thm:main} was recently proved in \cite{AAY}. There, it is assumed that the inequality $d(x)+d(y)\geq3a$ is satisfied by \emph{every} pair of non-adjacent vertices $x$ and $y$. It is thus a bipartite analogue of the original Meyniel's hamiltonicity criterion for ordinary digraphs. The author is happy to acknowledge the influence of \cite{AAY} on the present work. In fact, Lemma~\ref{lem:1} and the first part of the proof of Theorem~\ref{thm:main} are direct adaptations of the ideas from \cite{AAY}, developed together with Lech Adamus and Anders Yeo.

%%%%%%%%%%%%%%%%%%%%%%%%%%%%%%%%%%%%%%%%%%%%%%%%%%
\medskip
%%%%%%%%%%%%%%%%%%%%%%%%%%%%%%%%%%%%%%%%%%%%%%%%%%

\section{Notation and terminology}
\label{sec:not}

A \emph{digraph} $D$ is a pair $(V(D),A(D))$, where $V(D)$ is a finite set (of \emph{vertices}) and $A(D)$ is a set of ordered pairs of distinct elements of $V(D)$, called \emph{arcs} (i.e., $D$ has no loops or multiple arcs). The number of vertices $|V(D)|$ is the \emph{order} of $D$ (also denoted by $|D|$). For vertices $u$ and $v$ from $V(D)$, we write $uv\in A(D)$ to say that $A(D)$ contains the ordered pair $(u,v)$.

For a vertex set $S \subset V(D)$, we denote by $N^+(S)$ the set of vertices in $V(D)$ \emph{dominated} by the vertices of $S$; i.e.,
\[
N^+(S)=\{u\in V(D): vu\in A(D)\text{\ for\ some\ }v\in S\}\,.
\]
Similarly, $N^-(S)$ denotes the set of vertices of $V(D)$ \emph{dominating} vertices of $S$; i.e,
\[
N^-(S)=\{u\in V(D): uv\in A(D)\text{\ for\ some\ }v\in S\}\,.
\]
If $S=\{v\}$ is a single vertex, the cardinality of $N^+(\{v\})$ (resp. $N^-(\{v\})$), denoted by $d^+(v)$ (resp. $d^-(v)$) is called the
\emph{outdegree} (resp. \emph{indegree}) of $v$ in $D$. The \emph{degree} of $v$ is $d(v)=d^+(v)+d^-(v)$.

For vertex sets $S, T \subset V(D)$, we denote by $A[S,T]$ the set of all arcs of $A(D)$ from a vertex in $S$ to a vertex in $T$. Let $\arcs(S,T) = |A[S,T]|+|A[T,S]|.$ Note that $\arcs(\{v\},V(D)\setminus \{v\}) = d(v)$. A set of vertices $\{v_1,\ldots,v_k\}$ such that $\arcs(\{v_i\},\{v_j\})=0$, for all $i\neq j$, is called \emph{independent}.

A directed cycle (resp. directed path) on vertices $v_1,\dots,v_m$ in $D$ is denoted by $[v_1,\ldots,v_m]$ (resp. $(v_1,\ldots,v_m)$). We will refer to them as simply \emph{cycles} and \emph{paths} (skipping the term ``directed''), since their non-directed counterparts are not considered in this article at all.

A cycle passing through all the vertices of $D$ is called \emph{hamiltonian}. A digraph containing a hamiltonian cycle is called a \emph{hamiltonian digraph}. A \emph{cycle factor} in $D$ is a collection of vertex-disjoint cycles $C_1,\dots,C_l$ such that $V(C_1)\cup\dots\cup V(C_l)=V(D)$.

A digraph $D$ is \emph{strongly connected} when, for every pair of vertices $u,v\in V(D)$, $D$ contains a path originating in $u$ and terminating in $v$ and a path originating in $v$ and terminating in $u$.

A digraph $D$ is \emph{bipartite} when $V(D)$ is a disjoint union of independent sets $V_1$ and $V_2$ (the \emph{partite sets}).
It is called \emph{balanced} if $|V_1|=|V_2|$. One says that a bipartite digraph $D$ is \emph{complete} when $d(x)=2|V_2|$ for all $x\in V_1$.

A \emph{matching from $V_1$ to $V_2$} is an independent set of arcs with origin in $V_1$ and terminus in $V_2$ ($u_1u_2$ and $v_1v_2$ are \emph{independent arcs} when $u_1\neq v_1$ and $u_2\neq v_2$). If $D$ is balanced, one says that such a matching is \emph{perfect} if it consists of precisely $|V_1|$ arcs.

%%%%%%%%%%%%%%%%%%%%%%%%%%%%%%%%%%%%%%%%%%%%%%%%%%
\medskip
%%%%%%%%%%%%%%%%%%%%%%%%%%%%%%%%%%%%%%%%%%%%%%%%%%

\section{Lemmas}
\label{sec:lemmas}

The proof of Theorem~\ref{thm:main} will be based on the following three lemmas.

\begin{lemma}
\label{lem:1}
Let $D$ be a strongly connected balanced bipartite digraph with partite sets of cardinalities $a\geq2$.
If $D$ satisfies condition $\A$, then $D$ contains a cycle factor.
\end{lemma}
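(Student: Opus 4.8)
The plan is to reduce the existence of a cycle factor to a purely matching-theoretic statement and then to feed the degree condition $\A$ into a König-type deficiency argument. Write $V_1,V_2$ for the partite sets, with $|V_1|=|V_2|=a$.

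First I would observe that, since $D$ is bipartite, every cycle alternates between $V_1$ and $V_2$; consequently the arcs of a cycle factor running from $V_1$ to $V_2$ form a perfect matching $M$ from $V_1$ to $V_2$, while those running from $V_2$ to $V_1$ form a perfect matching $M'$ from $V_2$ to $V_1$. Conversely, given any perfect matching $M$ from $V_1$ to $V_2$ and any perfect matching $M'$ from $V_2$ to $V_1$, in $M\cup M'$ each vertex has in-degree and out-degree exactly $1$, so $M\cup M'$ decomposes into vertex-disjoint cycles covering $V(D)$, i.e. a cycle factor. Thus it suffices to produce these two matchings. Since reversing all arcs of $D$ preserves strong connectivity, balance, and condition $\A$ (a common out-neighbour becomes a common in-neighbour, and degrees are unchanged), it is enough to prove the existence of a perfect matching from $V_1$ to $V_2$; the matching $M'$ then follows by applying the same result to the reverse digraph.

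To produce the matching from $V_1$ to $V_2$, I would argue by contradiction using König's theorem. If no such perfect matching exists, then the bipartite graph on $V_1\cup V_2$ whose edges are the arcs of $A[V_1,V_2]$ has a vertex cover $C_1\cup C_2$ with $C_1\subseteq V_1$, $C_2\subseteq V_2$ and $|C_1|+|C_2|\le a-1$. Writing $c_1=|C_1|$, $c_2=|C_2|$, $S=V_1\setminus C_1$ and $T=V_2\setminus C_2$, the covering property forces every out-neighbour of a vertex of $S$ to lie in $C_2$ and every in-neighbour of a vertex of $T$ to lie in $C_1$; hence $d(s)\le a+c_2$ for all $s\in S$ and $d(y)\le a+c_1$ for all $y\in T$. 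Because $D$ is strongly connected, every vertex has positive out- and in-degree, so $\sum_{s\in S}d^+(s)\ge |S|=a-c_1>c_2$, and a pigeonhole argument yields two vertices of $S$ with a common out-neighbour; symmetrically $\sum_{y\in T}d^-(y)\ge |T|=a-c_2>c_1$ yields two vertices of $T$ with a common in-neighbour.

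Finally I would invoke condition $\A$ on these two pairs. The common-out-neighbour pair $s,s'$ gives $3a\le d(s)+d(s')\le 2(a+c_2)$, whence $c_2\ge a/2$, while the common-in-neighbour pair $y,y'$ gives $3a\le d(y)+d(y')\le 2(a+c_1)$, whence $c_1\ge a/2$; together these contradict $c_1+c_2\le a-1$. I expect the point requiring the most care to be the bookkeeping in this deficiency argument: extracting the cover via König, verifying the degree bounds $d\le a+c_i$ off the cover, and confirming that the pigeonhole counts genuinely produce a pair sharing a neighbour (which is exactly where strong connectivity, guaranteeing $d^+,d^-\ge 1$, is used). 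The reduction to two matchings and the duality under arc reversal should be routine once stated.
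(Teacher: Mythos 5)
Your proposal is correct, but the core matching argument takes a genuinely different route from the paper's. Both proofs begin identically: a cycle factor exists if and only if there are perfect matchings from $V_1$ to $V_2$ and from $V_2$ to $V_1$ (the paper dismisses the second direction as ``analogous''; your arc-reversal reduction is an equivalent, slightly cleaner way to justify that). For the matching itself, the paper verifies Hall's condition: it supposes $|N^+(S)|<|S|$ for some $S\subset V_1$, rules out $|S|\in\{1,a\}$ by strong connectivity, and then splits into two cases, $|S|>a/2$ and $2\leq|S|\leq a/2$; in the first case a pigeonhole argument produces a common-in-neighbour pair in $V_2\setminus N^+(S)$, in the second a common-out-neighbour pair in $S$, and in each case condition $\A$ alone already yields the numerical contradiction. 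You instead work with the dual, K{\"o}nig vertex-cover formulation: from a cover $C_1\cup C_2$ with $c_1+c_2\leq a-1$ you extract \emph{both} pairs at once --- a common-out-neighbour pair in $V_1\setminus C_1$ and a common-in-neighbour pair in $V_2\setminus C_2$ --- and condition $\A$ applied to each gives $c_2\geq a/2$ and $c_1\geq a/2$, which together contradict the cover bound. What your version buys is symmetry: there is no case distinction on the size of the deficient set, because the cover bound is precisely what lets the two inequalities be played off against each other. What the paper's version buys is that it needs only neighbourhoods and the Hall condition, and each case closes with a single pair. The two formulations are of course interchangeable (a Hall violator $S$ yields the cover $(V_1\setminus S)\cup N^+(S)$ of size less than $a$, and conversely $S=V_1\setminus C_1$ is a violator). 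One point to spell out in a polished write-up: your pigeonhole step tacitly assumes $c_2\geq1$ (respectively $c_1\geq1$); if $c_2=0$, then every vertex of $V_1\setminus C_1$ has out-degree zero, which contradicts strong connectivity outright, so the degenerate case closes immediately, but it should be mentioned rather than absorbed silently into the pigeonhole count.
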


\begin{proof}
Let $V_1$ and $V_2$ denote the two partite sets of $D$. Observe that $D$ contains a cycle factor if and only if there exist both a perfect matching from $V_1$ to $V_2$ and a perfect matching from $V_2$ to $V_1$. Therefore, by the K{\"o}nig-Hall theorem (see, e.g., \cite{B}), it suffices to show that $|N^+(S)|\geq|S|$ for every $S\subset V_1$ and $|N^+(T)|\geq|T|$ for every $T\subset V_2$.

For a proof by contradiction, suppose that a non-empty set $S\subset V_1$ is such that $|N^+(S)|<|S|$. Then $V_2\setminus N^+(S)\neq\varnothing$ and, for every $y\in V_2\setminus N^+(S)$, we have $d^-(y)\leq a-|S|$. Hence
\begin{equation}
\label{eq:2a-S}
d(y)\leq 2a-|S| \quad\mathrm{for\ every\ } y\in V_2\setminus N^+(S). 
\end{equation}
If $|S|=1$ then $|N^+(S)|=0$, and so the only vertex of $S$ has out-degree zero, which is impossible in a strongly connected $D$. If, in turn, $|S|=a$, then every vertex from $V_2\setminus N^+(S)$ has in-degree zero, which again contradicts strong connectedness of $D$. Therefore, $2\leq|S|\leq a-1$.
We now consider the following two cases.

\subsubsection*{Case 1.} $\frac{a}{2}<|S|\leq a-1$.\\
Since $D$ is strongly connected, we have $d^-(y)\geq 1$ for every $y\in V_2\setminus N^+(S)$. Note that $|V_2\setminus N^+(S)|\geq|V_1\setminus S|+1\geq2$. On the other hand, the vertices of $V_2\setminus N^+(S)$ are dominated only by those of $V_1\setminus S$. It follows that $V_2\setminus N^+(S)$ contains at least one pair of vertices, say $y_1$ and $y_2$, with a common in-neighbour. Condition $\A$ together with \eqref{eq:2a-S} thus imply that
\[
3a\leq d(y_1)+d(y_2)\leq 2(2a-|S|)=4a-2|S|<4a-a;
\]
a contradiction.

\subsubsection*{Case 2.} $2\leq|S|\leq\frac{a}{2}$.\\
Since $D$ is strongly connected, we have $d^+(x)\geq1$ for every $x\in S$. On the other hand, $|N^+(S)|\leq|S|-1$. It follows that $S$ contains at least one pair of vertices, say $x_1$ and $x_2$, with a common out-neighbour. Condition $\A$ thus implies that
\begin{multline}
\notag
3a\leq d(x_1)+d(x_2)=d^-(x_1)+d^+(x_1)+d^-(x_2)+d^+(x_2)\leq\\
a+(|S|-1)+a+(|S|-1)\leq 3a-2\,;
\end{multline}
a contradiction.

This completes the proof of existence of a perfect matching from $V_1$ to $V_2$. The proof for a matching in the opposite direction is analogous.
\end{proof}

\begin{lemma}
\label{lem:2}
Let $D$ be a strongly connected balanced bipartite digraph with partite sets of cardinalities $a\geq2$, which satisfies condition $\A$. Suppose that $D$ is non-hamiltonian. Then, for every $u\in V(D)$, there exists $v\in V(D)\setminus\{u\}$ such that $u$ and $v$ have a common in-neigbour or out-neighbour in $D$. 
\end{lemma}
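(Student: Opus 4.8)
The plan is to argue by contradiction: suppose some vertex $u$ fails the conclusion, so that $u$ shares neither a common in-neighbour nor a common out-neighbour with any other vertex. By the symmetry between $V_1$ and $V_2$ we may assume $u\in V_1$. Unwinding the definition, this failure means precisely that every out-neighbour $w$ of $u$ has $u$ as its \emph{unique} in-neighbour (otherwise a second in-neighbour of $w$ would, together with $u$, be a partner of $u$ via the common out-neighbour $w$), so $d^-(w)=1$; and symmetrically every in-neighbour $w$ of $u$ has $u$ as its unique out-neighbour, so $d^+(w)=1$.

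First I would bound the degree of $u$. Any two distinct out-neighbours $w_1,w_2$ of $u$ have $u$ as a common in-neighbour, so condition $\A$ gives $d(w_1)+d(w_2)\geq 3a$; but $d(w_i)=d^+(w_i)+1\leq a+1$, whence $3a\leq 2a+2$ and $a\leq 2$. Thus for $a\geq3$ the vertex $u$ has at most one out-neighbour, and strong connectivity forces exactly one; the symmetric argument on the in-neighbours of $u$ (any two of which share the common out-neighbour $u$) gives exactly one in-neighbour. The case $a=2$ I would dispose of directly, since two out-neighbours would exhaust $V_2$ and leave the other vertex of $V_1$ with out-degree $0$. Hence $d(u)=2$, say $q\to u\to p$ with $p,q\in V_2$, where moreover $d^-(p)=1$ and $d^+(q)=1$. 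If $p=q$, then all arcs incident to $u$ and to $p$ stay within $\{u,p\}$, making it a strong component disjoint from the rest, which is impossible for $a\geq2$; so $p\neq q$.

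For the final step I would exploit the rigidity this forces on the rest of $D$: the only arc entering $p$ is $u\to p$ and the only arc leaving $q$ is $q\to u$, so no vertex of $V_1\setminus\{u\}$ dominates $p$ and none is dominated by $q$. Consequently every vertex of $V_1\setminus\{u\}$ has out-degree and in-degree at most $a-1$. Since $D$ is strongly connected, there must be a path from $p$ back to $q$ through $V_1\setminus\{u\}$ and $V_2$; I would trace such a path and apply condition $\A$ to the pairs of vertices it forces to share a neighbour --- for instance, any two in-neighbours of $q$ share the common out-neighbour $q$, and $p$ shares an out-neighbour with any vertex of $V_2$ that dominates a target in $V_1\setminus\{u\}$ also dominated by $p$. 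The aim is to show that the capped degrees in $V_1\setminus\{u\}$ cannot simultaneously sustain the sharing that connectivity demands and the lower bounds that $\A$ imposes, so that either two such vertices share a neighbour while their degrees sum to less than $3a$ (contradicting $\A$), or the routing uses every remaining vertex and closes up with $q\to u\to p$ into a hamiltonian cycle (contradicting non-hamiltonicity).

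I expect this last step to be the main obstacle: the reduction to $d(u)=2$ is forced and clean, but converting the resulting rigidity into an outright contradiction appears to require a careful case analysis, likely organized by whether $p$ and $q$ themselves have partners. Indeed, by $\A$ a partner for $p$ (necessarily via a common out-neighbour, since $p$ has the unique in-neighbour $u$) would force $d^+(p)\geq a-1$, hence $p$ to dominate all of $V_1\setminus\{u\}$, and symmetrically a partner for $q$ would force all of $V_1\setminus\{u\}$ to dominate $q$. The cycle factor supplied by Lemma~\ref{lem:1}, which must route through the forced segment $q\to u\to p$, is the natural tool for controlling this analysis.
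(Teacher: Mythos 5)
Your reduction to $d(u)=2$ is correct and clean: the observation that every out-neighbour of $u$ has in-degree $1$, the use of condition $\A$ to rule out two out-neighbours when $a\geq3$, the direct disposal of $a=2$, and the exclusion of $p=q$ are all sound. But the proof effectively stops there. The final step is only a plan (``I would trace such a path and apply condition $\A$ to the pairs of vertices it forces to share a neighbour\dots''), and you yourself flag it as the main obstacle; as written, no contradiction is actually derived, so the lemma is not proved.

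The missing idea --- and the crux of the paper's own proof --- is a \emph{propagation} step: the vertex $p$ inherits from $u$ the property of having no partner at all. Indeed, since $d^-(p)=1$ gives $d(p)\leq a+1$, any $y'\in V_2$ sharing an in- or out-neighbour with $p$ would satisfy $d(y')\geq 3a-(a+1)=2a-1$; but then $d^+(y')=a$ or $d^-(y')=a$, i.e.\ $y'$ dominates all of $V_1$ or is dominated by all of $V_1$, which makes $y'$ a common in-neighbour (or out-neighbour) of $u$ and every other vertex of $V_1$, contradicting the choice of $u$. So no such $y'$ exists. (Your own remark that a partner of $p$ would force $d^+(p)\geq a-1$ looks at the wrong side of the inequality: the useful consequence of condition $\A$ here is the forced degree of the \emph{partner}, not of $p$, and it shows the partner cannot exist at all.) Once you have this, your argument closes: $p$ has no partner, hence --- repeating your first step with $p$ in place of $u$ --- it has a unique out-neighbour of in-degree $1$, and so on; the deterministically forced chain $u\to p\to p'\to\cdots$ must close into a cycle through $u$ (any earlier re-entry would give some chain vertex in-degree $2$), and that cycle admits no arcs into or out of it, contradicting strong connectivity unless the cycle is hamiltonian, which is excluded by hypothesis. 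The paper organizes exactly this propagation along the cycle $C_1$ of a cycle factor supplied by Lemma~\ref{lem:1} (its $x'^+$ plays the role of your $p$), concluding that $C_1$ has no arcs to or from the rest of $D$, which is impossible in a strongly connected non-hamiltonian digraph.
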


\begin{proof}
For a proof by contradiction, suppose that $x'\in V(D)$ has no common in-neighbour or out-neighbour with any other vertex in $D$. By Lemma~\ref{lem:1}, $D$ has a cycle factor, say, $\F=\{C_1,\dots,C_l\}$, with $l\geq2$ (as $D$ is non-hamiltonian). Without loss of generality, we may assume that $x'\in V_1\cap V(C_1)$. 

Let $x'^+$ denote the successor of $x'$ on $C_1$. We have $d^-(x'^+)=1$. Indeed, for if $d^-(x'^+)\geq2$ then $x'^+$ would be a common out-neighbour of $x'$ and some other vertex from $V_1$. It follows that
\begin{equation}
\label{eq:x'+}
d(x'^+)=d^+(x'^+)+d^-(x'^+)\leq a+1.
\end{equation}
We claim that $x'^+$ has no common in-neighbour or out-neighbour with any other vertex in $V_2$. Suppose otherwise, and let $y'\in V_2$ be a vertex which shares an in-neighbour or an out-neighbour with $x'^+$. Then, by condition $\A$ and \eqref{eq:x'+}, we have
\[
3a\leq d(y')+d(x'^+)\leq d(y')+a+1,
\]
hence $d(y')\geq 2a-1$. It follows that $xy'\in A(D)$ for all $x\in V_1$ or else $y'x\in A(D)$ for all $x\in V_1$.
In the first case, $y'$ is a common out-neighbour of $x'$ and every other vertex in $V_1$, and in the second case $y'$ is a common in-neighbour of $x'$ and every other vertex in $V_1$. This contradicts the choice of $x'$. Consequently, there is no such $y'$, that is, $x'^+$ has no common in-neighbour or out-neighbour with any vertex in $V(D)$.

By repeating the above argument, one can now show that $x'^{++}$, the successor of $x'^+$ on $C_1$ has no common in-neighbour or out-neighbour with any vertex in $V(D)$, and, inductively, that no vertex of $C_1$ has a common in-neighbour or out-neighbour with any other vertex.
In particular, this means that there are no arcs in or out of $C_1$, which is not possible in a strongly connected non-hamiltonian digraph. This contradiction completes the proof of the lemma.
\end{proof}

\begin{lemma}
\label{lem:3}
Let $D$ be a strongly connected balanced bipartite digraph with partite sets of cardinalities $a\geq2$, which satisfies condition $\A$.
If $D$ is non-hamiltonian, then $d(u)\geq a$ for all $u\in V(D)$. 
\end{lemma}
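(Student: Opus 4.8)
The plan is to argue by contradiction, combining the lower bound on degrees that condition $\A$ forces on pairs sharing a common neighbour with the trivial upper bound on the degree of any vertex in a balanced bipartite digraph. The whole proof should be very short, since the real work has already been done in Lemma~\ref{lem:2}.

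First I would fix an arbitrary vertex $u\in V(D)$ and suppose, for contradiction, that $d(u)\leq a-1$. The key structural input is Lemma~\ref{lem:2}: because $D$ is strongly connected, satisfies condition $\A$, and is non-hamiltonian, there exists a vertex $v\in V(D)\setminus\{u\}$ that shares a common in-neighbour or a common out-neighbour with $u$. Condition $\A$ is therefore applicable to the pair $u,v$, and it yields $d(u)+d(v)\geq 3a$.

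Next I would invoke the elementary observation that in a balanced bipartite digraph with partite sets of cardinality $a$, every vertex has its out-neighbours and in-neighbours confined to the opposite partite set, so $d^+(v)\leq a$ and $d^-(v)\leq a$, whence $d(v)\leq 2a$. Combining this with the inequality just obtained gives $d(u)\geq 3a-d(v)\geq 3a-2a=a$, which contradicts the assumption $d(u)\leq a-1$. Since $u$ was arbitrary, this establishes $d(u)\geq a$ for every $u\in V(D)$.

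I do not anticipate any genuine obstacle: essentially all the content is already encapsulated in Lemma~\ref{lem:2}, whose role is precisely to guarantee that condition $\A$ can be applied to \emph{every} vertex. The only point requiring care is to keep this dependence explicit — without Lemma~\ref{lem:2}, the vertex $u$ might fail to share a neighbour with any other vertex, and then condition $\A$ would impose no constraint on $d(u)$ whatsoever, so the argument would not get started.
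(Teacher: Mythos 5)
Your proof is correct and is exactly the argument the paper gives (the paper merely states it in one sentence): Lemma~\ref{lem:2} supplies a partner vertex $v$ so that condition $\A$ applies, and the trivial bound $d(v)\leq 2a$ forces $d(u)\geq a$. Your expanded write-up, including the remark that Lemma~\ref{lem:2} is what makes condition $\A$ applicable to every vertex, is a faithful elaboration of the paper's proof.
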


\begin{proof}
This follows immediately from Lemma~\ref{lem:2}, condition $\A$, and the fact that the degree of every vertex in $D$ is bounded above by $2a$.
\end{proof}

%%%%%%%%%%%%%%%%%%%%%%%%%%%%%%%%%%%%%%%%%%%%%%%%%%
\medskip
%%%%%%%%%%%%%%%%%%%%%%%%%%%%%%%%%%%%%%%%%%%%%%%%%%

\section{Proof of the main result}
\label{sec:main-proof}

\subsubsection*{Proof of Theorem~\ref{thm:main}}
Let $D$ be a balanced bipartite digraph on $2a$ vertices, and let $V_1$ and $V_2$ denote its partite sets.
Suppose first that $a=2$. By Lemma~\ref{lem:1}, $D$ contains a cycle factor. If $D$ is not hamiltonian, this factor must consist of two $2$-cycles, say $C_1$ with vertices $x_1\in V_1$ and $y_1\in V_2$, and $C_2$ with vertices $x_2\in V_1$ and $y_2\in V_2$.
By strong connectedness of $D$ there must also exist at least one arc from $C_1$ to $C_2$ and one arc from $C_2$ to $C_1$. The only configuration in which $D$ is not hamiltonian is when there is precisely one such arc in each direction and they both join the same pair of vertices, say $x_1$ with $y_2$. $D$ is thus obtained from $\stackrel{\leftrightarrow}{K}_{2,2}$ by removing the $2$-cycle $[x_2,y_1]$.
\medskip

From now on, we assume that $a\geq3$.
By Lemma \ref{lem:1}, $D$ contains a cycle factor $\F=\{C_1, C_2,\dots, C_l\}$. Assume that $l$ is minimum possible, and for a proof by contradiction suppose that $l\geq 2$. Recall that $|C_i|$ denotes the order of cycle $C_i$.
Without loss of generality, assume that $|C_1|\leq|C_2|\leq\dots\leq|C_l|$.

\subsection*{Claim 1:} $\arcs(V(C_i), V(C_j))\leq \frac{|C_i|\cdot |C_j|}{2}$, for all $i\neq j$.

\subsubsection*{Proof of Claim 1.} Let $q\in \{1,2\}$, $u_i\in V(C_i)\cap V_q$ and $u_j\in V(C_j)\cap V_q$ be arbitrary. Let $u_i^+$ be the successor of $u_i$ in $C_i$ and let $u_j^+$ be the successor of $u_j$ in $C_j$. Let $\mathcal{Z}_q(u_i,u_j)$ be defined as $A(D)\cap \{u_iu_j^+,\ u_ju_i^+\}.$ If $|\mathcal{Z}_q(u_i,u_j)|=2$ for some $u_i,u_j$, then the cycles $C_i$ and $C_j$ can be merged into one cycle by deleting the arcs $u_iu_i^+$ and $u_ju_j^+$ and adding the arcs $u_iu_j^+$ and $u_ju_i^+$. This would contradict the minimality of $l$, so we may assume that
\begin{equation}
\label{eq:Z}
|\mathcal{Z}_q(u_i,u_j)|\leq 1\quad\mathrm{for\ all}\ u_i \in V(C_i)\cap V_q\ \mathrm{and}\ u_j \in V(C_j)\cap V_q\,.
\end{equation}
Now, consider an arc $uv\in A[V(C_i), V(C_j)]$ and assume $u\in V_q$. Let $v^-$ denote the predecessor of $v$ in $C_j$. Then $uv\in \mathcal{Z}_{q}(u,v^-)$. Similarly, if $uv\in A[V(C_j), V(C_i)]$, $u\in V_q$, and $v^-$ is the predecessor of $v$ in $C_i$, then $uv\in \mathcal{Z}_{q}(v^-,u)$. Therefore
\[
\arcs(V(C_i), V(C_j))\leq \sum_{q=1}^{2} \sum_{u_i\in V(C_i)\cap V_q} \sum_{u_j\in V(C_j)\cap V_q} |\mathcal{Z}_q(u_i,u_j)|\,,
\]
and hence, by \eqref{eq:Z},
\[
\arcs(V(C_i), V(C_j)) \leq
2 \cdot \frac{|C_i|}{2} \cdot \frac{|C_j|}{2}\,,
\]
which completes the proof of Claim 1.
\medskip

We now return to the proof of Theorem \ref{thm:main}. Repeatedly using Claim 1, we note that the following holds
\begin{multline}
\label{eq:V1andV2}
\arcs(V(C_1)\cap V_1, V(D)\setminus V(C_1)) + \arcs(V(C_1)\cap V_2, V(D)\setminus V(C_1))\\
= \arcs(V(C_1), V(D)\setminus V(C_1))=\sum_{j=2}^{l} \arcs(V(C_1), V(C_j))\leq \frac{|C_1|(2a-|C_1|)}{2}\,.
\end{multline}
Without loss of generality, we may assume that 
\begin{equation}
\label{eq:V1}
\arcs(V(C_1)\cap V_1, V(D)\setminus V(C_1))\leq \frac{|C_1|(2a-|C_1|)}{4},
\end{equation}
as otherwise 
\begin{equation}
\label{eq:V2}
\arcs(V(C_1)\cap V_2, V(D)\setminus V(C_1))\leq \frac{|C_1|(2a-|C_1|)}{4}.
\end{equation}
In other words, the average number of arcs between a vertex in $V(C_1)\cap V_1$ and $V(D) \setminus V(C_1)$ is bounded above by $(2a-|C_1|)/2$ (as $|V(C_1)\cap V_1| = |C_1|/2$). We now consider the following two cases.
\smallskip

\subsection*{Case 1.} $|C_1|\geq 4$.\\
Let $x_1,x_2\in V(C_1)\cap V_1$ be distinct and chosen so that $\arcs(\{x_1,x_2\}, V(D)\setminus V(C_1))$ is minimum. By the above formula we note that $\arcs(\{x_1,x_2\}, V(D)\setminus V(C_1))\leq 2a-|C_1|$.  Since any vertex in $C_1$ has at most $|C_1|$ arcs to other vertices in $C_1$ (as there are $|C_1|/2$ vertices from $V_2$ in $C_1$) and $|C_1|\leq a,$ we get that
\begin{equation}
\label{eq:3a}
d(x_1)+d(x_2)\leq 2|C_1| + 2a - |C_1|= 2a + |C_1| \leq 3a.
\end{equation}
We shall now prove that every two vertices in $V_2\cap V(C_1)$ share a common in-neighbour and that the inequality \eqref{eq:V2} holds.
To that end, we need to consider two sub-cases depending on the properties of $x_1$ and $x_2$.

Suppose first that $x_1$ and $x_2$ have a common in-neighbour or out-neighbour.
Condition $\A$ then implies that we have equality in \eqref{eq:3a}. It follows that there must be equalities in all the estimates that led to \eqref{eq:3a} as well. In particular,
\begin{gather}
\label{eq:3a-1}
\arcs(\{x_1,x_2\},V(D)\setminus V(C_1))=2a-|C_1|,\quad\mathrm{and}\\
\label{eq:3a-2}
\arcs(\{x_1\},V(C_1))=\arcs(\{x_2\},V(C_1))=|C_1|.
\end{gather}
By the choice of $x_1$ and $x_2$, it now follows from \eqref{eq:3a-1} that we have equality in \eqref{eq:V1}, and hence, by \eqref{eq:V1andV2}, the inequality \eqref{eq:V2} is satisfied.
Moreover, by \eqref{eq:3a-2}, every two vertices in $V_2\cap V(C_1)$ have a common in-neighbour, namely $x_1$.

Suppose then that $x_1$ and $x_2$ have no common in-neighbour or out-neighbour. In this case, we have
\begin{equation}
\label{eq:non-C1}
\begin{split}
|N^+(x_1)\cap (V(D)\setminus V(C_1))|+|N^+(x_2)\cap (V(D)\setminus V(C_1))|&\leq a-\frac{|C_1|}{2}\,,\\
|N^-(x_1)\cap (V(D)\setminus V(C_1))|+|N^-(x_2)\cap (V(D)\setminus V(C_1))|&\leq a-\frac{|C_1|}{2}\,,
\end{split}
\end{equation}
as well as
\begin{equation}
\notag
\begin{split}
|N^+(x_1)\cap V(C_1)|+|N^+(x_2)\cap V(C_1)|&\leq \frac{|C_1|}{2}\,, \mathrm{\ and}\\
|N^-(x_1)\cap V(C_1)|+|N^-(x_2)\cap V(C_1)|&\leq \frac{|C_1|}{2}\,.
\end{split}
\end{equation}
Hence, $d(x_1)+d(x_2)\leq2a$. Therefore, by Lemma~\ref{lem:3}, $d(x_1)=d(x_2)=a$ and, consequently, we have equalities in \eqref{eq:non-C1}. By the choice of $x_1$ and $x_2$, it follows that we have equality in \eqref{eq:V1}, and hence, by \eqref{eq:V1andV2}, the inequality \eqref{eq:V2} holds. Moreover, by Lemma~\ref{lem:2}, there exists $x'\in V_1\setminus\{x_1\}$ such that $x_1$ and $x'$ have a common in-neighbour or out-neighbour. Condition $\A$ then implies that $d(x')=2a$. In particular, every two vertices in $V_2\cap V(C_1)$ have a common in-neighbour, namely $x'$.

Next, let $y_1,y_2\in V(C_1)\cap V_2$ be distinct and chosen so that $\arcs(\{y_1,y_2\}, V(D)\setminus V(C_1))$ is minimum.
By \eqref{eq:V2}, we have $\arcs(\{y_1,y_2\}, V(D)\setminus V(C_1))\leq 2a-|C_1|$. Since any vertex in $C_1$ has at most $|C_1|$ arcs to other vertices in $C_1$ (as there are $|C_1|/2$ vertices from $V_2$ in $C_1$) and $|C_1|\leq a,$ we get that
\begin{equation}
\label{eq:3a-y}
d(y_1)+d(y_2)\leq 2|C_1| + 2a - |C_1|= 2a + |C_1| \leq 3a.
\end{equation}
Since $y_1$ and $y_2$ have a common in-neighbour, condition $\A$ implies that we have equality in \eqref{eq:3a-y}.
It follows that there must be equalities in all the estimates that led to \eqref{eq:3a-y} as well. That is,
\begin{gather}
\label{eq:3a-y-1}
\arcs(\{y_1,y_2\},V(D)\setminus V(C_1))=2a-|C_1|,\\
\label{eq:3a-y-2}
\arcs(\{y_1\},V(C_1))=\arcs(\{y_2\},V(C_1))=|C_1|,\\
\label{eq:3a-y-3}
|C_1|=a.
\end{gather}
By the choice of $y_1$ and $y_2$, it now follows from \eqref{eq:3a-y-1} and \eqref{eq:V2} that
\[
\arcs(\{y',y''\},V(D)\setminus V(C_1))=2a-|C_1|
\]
for any distinct $y',y''\in V_2\cap V(C_1)$.
Since any two such $y',y''$ have a common in-neighbour, we can repeat the above argument with $y'$ and $y''$ in place of $y_1$ and $y_2$ and conclude that \eqref{eq:3a-y-2} is satisfied by all vertices in $V_2\cap V(C_1)$. In other words, $D$ contains a complete bipartite digraph spanned on the vertices of $C_1$.

Next observe that, by minimality of $|C_1|$, \eqref{eq:3a-y-3} implies that $l=2$ and $|C_1|=|C_2|=a$.
Consequently, we can swap $C_1$ and $C_2$ and repeat the argument of Case 1 to get that $D$ contains also a complete bipartite digraph spanned on the vertices of $C_2$.
\smallskip

Now, we claim that\\
(i) $A[V(C_1)\cap V_1,V(C_2)]\neq\varnothing$ and $A[V(C_2),V(C_1)\cap V_2]\neq\varnothing$, or\\
(ii) $A[V(C_1)\cap V_2,V(C_2)]\neq\varnothing$ and $A[V(C_2),V(C_1)\cap V_1]\neq\varnothing$.\\
Indeed, condition $\A$ applied to pairs of vertices from $V(C_1)\cap V_1$ implies that there exists $x\in V(C_1)\cap V_1$ with $\arcs(\{x\},V(C_2))>0$. Similarly, there exists $y\in V(C_1)\cap V_2$ such that $\arcs(\{y\},V(C_2))>0$. Therefore, if neither (i) nor (ii) held, then all the arcs between $C_1$ and $C_2$ would need to go in the same direction (i.e., either $A[V(C_1),V(C_2)]=\varnothing$ or $A[V(C_2),V(C_1)]=\varnothing$). But such an arrangement is impossible in a strongly connected digraph.

Thus, without loss of generality we can assume that $D$ contains an arc from $V(C_1)\cap V_1$ to $V(C_2)$ and an arc from $V(C_2)$ to $V(C_1)\cap V_2$. Then, however, $D$ must be hamiltonian, because it contains complete bipartite digraphs on $V(C_1)$ and on $V(C_2)$. This contradiction completes the proof of Case 1.
\medskip

\subsection*{Case 2.} $|C_1|< 4$.\\
In this case $|C_1|=2$. Let $V(C_1)\cap V_1 = \{x_1\}$ and $V(C_1)\cap V_2 = \{y_1\}$. Note that, by \eqref{eq:V1}, we have $d(x_1)\leq 2 + (2a-|C_1|)/2 = a+1$. By Lemma~\ref{lem:2}, $x_1$ shares a common in-neighbour or out-neighbour with a vertex, say $x'$, in $V_1\setminus\{x_1\}$. By condition $\A$, $d(x')\geq 2a-1$, and so
\begin{equation}
\label{y1-all-neighbour}
x'y\in A(D) \mathrm{\ for\ all\ } y\in V_2\qquad\mathrm{or\ else}\qquad yx'\in A(D) \mathrm{\ for\ all\ } y\in V_2.
\end{equation}
That is, $y_1$ has a common in-neighbour with every vertex in $V_2\setminus\{y_1\}$ or else $y_1$ has a common out-neighbour with every vertex in $V_2\setminus\{y_1\}$. 
The remainder of the proof of this case is divided into two sub-cases depending on the actual value of $d(x_1)$.

\subsubsection*{Case 2a.} $d(x_1)=a+1$.\\
Then, by \eqref{eq:V1andV2}, $d(y_1)\leq a+1$. Hence, by \eqref{y1-all-neighbour} and condition $\A$, we have
\begin{equation}
\label{eq:all-y}
d(y)\geq2a-1 \mathrm{\ \ for\ all\ } y\in V_2\setminus\{y_1\}.
\end{equation}
It follows that, for every $y\in V_2\setminus\{y_1\}$, at least one of the arcs $x_1y,yx_1$ belongs to $A(D)$. Moreover, every $x\in V_1\setminus\{x_1\}$ shares a common in-neighbour or out-neighbour with $x_1$, and so
\begin{equation}
\label{eq:all-x}
d(x)\geq 2a-1 \mathrm{\ \ for\ all\ } x\in V_1\setminus\{x_1\}.
\end{equation}
We now claim that, for every $x\neq x_1$, at most one of the arcs $xy_1,y_1x$ is contained in $A(D)$. Suppose otherwise, and let $\tilde{x}\in V_1\setminus\{x_1\}$ be such that $\tilde{x}y_1,y_1\tilde{x}\in A(D)$. Say, $\tilde{x}\in V(C_j)$ for some $j\neq1$. Let $\tilde{x}^+$ (resp. $\tilde{x}^-$) denote the successor (resp. predecessor) of $\tilde{x}$ on $C_j$. By~\eqref{eq:all-y}, one of the following must hold:
\begin{itemize}
\item[(i)] $x_1\tilde{x}^+\in A(D)$, or
\item[(ii)] $\tilde{x}^-x_1\in A(D)$, or else
\item[(iii)] $x_1\tilde{x}^+\notin A(D)$, $\tilde{x}^-x_1\notin A(D)$, and $\tilde{x}^+x_1, x_1\tilde{x}^-\in A(D)$.
\end{itemize}
In the first case, one can merge $C_1$ with $C_j$ by replacing the arc $\tilde{x}\tilde{x}^+$ on $C_j$ with the path $(\tilde{x},y_1,x_1,\tilde{x}^+)$. This contradicts the minimality of $l$. In the second case, one can merge $C_1$ with $C_j$ by replacing the arc $\tilde{x}^-\tilde{x}$ on $C_j$ with the path $(\tilde{x}^-,x_1,y_1,\tilde{x})$. This contradicts the minimality of $l$. In the third case, in turn, both $\tilde{x}^+$ and $\tilde{x}^-$ are joined by symmetric arcs with every vertex in $V_1\setminus\{x_1\}$, by \eqref{eq:all-y}. One can thus merge $C_1$ with $C_j$ by replacing the path $(\tilde{x}^{--},\dots,\tilde{x}^{++})$ on $C_j$ with the path $(\tilde{x}^{--},\tilde{x}^+,x_1,y_1,\tilde{x},\tilde{x}^-,\tilde{x}^{++})$, where $\tilde{x}^{++}$ (resp. $\tilde{x}^{--}$) denotes the successor of $\tilde{x}^{+}$ (resp. predecessor of $\tilde{x}^{-}$) on $C_j$. This again contradicts the minimality of $l$, which completes the proof of our claim. (Note that the above argument works whenever $|C_j|\geq4$. If $|C_j|=2$, however, there is nothing to prove, given that $\tilde{x}y_1,y_1\tilde{x}\in A(D)$ and one of (i)-(iii) holds.)

By \eqref{eq:all-x}, we now get that every $x\neq x_1$ is joined by symmetric arcs with all vertices in $V_2\setminus\{y_1\}$. In other words, $D$ contains a complete bipartite digraph spanned by the vertices $V(D)\setminus\{x_1,y_1\}$. Moreover, by \eqref{eq:all-y} and \eqref{eq:all-x}, we have $\arcs(\{x_1\},\{y\})\geq1$ and $\arcs(\{y_1\},\{x\})\geq1$ for all $y\neq y_1, x\neq x_1$. Since in a strongly connected digraph it cannot happen that $A[V(C_1),V(D)\setminus V(C_1)]=\varnothing$ or $A[V(D)\setminus V(C_1),V(C_1)]=\varnothing$, it follows that there exist vertices $\tilde{x},\tilde{y}\in V(D)\setminus V(C_1)$ such that $x_1\tilde{y}, \tilde{x}y_1\in A(D)$ or $\tilde{y}x_1,y_1\tilde{x}\in A(D)$. One can readily see that then $D$ contains a Hamilton cycle. This contradiction completes the proof of \emph{Case 2a}.

\subsubsection*{Case 2b.} $d(x_1)=a$.\\
Since $a\geq3$, it follows that there exists $\tilde{y}\in V_2\setminus\{y\}$ such that $x_1\tilde{y}\in A(D)$ or $\tilde{y}x_1\in A(D)$.
Say, $\tilde{y}\in V(C_j)$ for some $j\neq1$.
Let $\tilde{y}^+$ (resp. $\tilde{y}^-$) denote the successor (resp. predecessor) of $\tilde{y}$ on $C_j$.
If $x_1\tilde{y}\in A(D)$, then $\tilde{y}$ is a common out-neighbour of $x_1$ and $\tilde{y}^-$, and so $d(\tilde{y}^-)=2a$, by condition $\A$. In particular, $\tilde{y}^-y_1\in A(D)$, and hence $C_1$ can be merged with $C_j$ by replacing the arc $\tilde{y}^-\tilde{y}$ on $C_j$ with the path $(\tilde{y}^-,y_1,x_1,\tilde{y})$. This contradicts the minimality of $l$.
If, in turn, $\tilde{y}x_1\in A(D)$, then $\tilde{y}$ is a common in-neighbour of $x_1$ and $\tilde{y}^+$, and so $d(\tilde{y}^+)=2a$, by condition $\A$. In particular, $y_1\tilde{y}^+\in A(D)$, and hence $C_1$ can be merged with $C_j$ by replacing the arc $\tilde{y}\tilde{y}^+$ on $C_j$ with the path $(\tilde{y},x_1,y_1,\tilde{y}^+)$. This again contradicts the minimality of $l$, which completes the proof of the theorem.
\qed

%%%%%%%%%%%%%%%%%%%%%%%%%%%%%%%%%%%%%%%%%%%%%%%%%%
\medskip
%%%%%%%%%%%%%%%%%%%%%%%%%%%%%%%%%%%%%%%%%%%%%%%%%%
%References
%%%%%%%%%%%%%%%%%%%%%%%%%%%%%%%%%%%%%%%%%%%%%%%%%%


\begin{thebibliography}{99}

\bibitem {AAY} J.\,Adamus, L.\,Adamus and A.\,Yeo,
 \textit{On the Meyniel condition for hamiltonicity in bipartite digraphs}, Discrete Math. Theor. Comput. Sci. \textbf{16} (2014), 293--302.

\bibitem {AM} D. Amar and Y. Manoussakis,
 \textit{Cycles and paths of many lengths in bipartite digraphs}, J. Combin. Theory Ser. B \textbf{50} (1990), 254--264.

\bibitem {BGY} J.\,Bang-Jensen, Y.\,Guo and A.\,Yeo,
 \textit{A new sufficient condition for a digraph to be Hamiltonian}, Discrete Appl. Math. \textbf{95} (1999), 61--72.

\bibitem {BG} J.\,Bang-Jensen and G.\,Gutin,
 ``Digraphs: Theory, Algorithms and Applications'', $2$nd edition, Springer, London, 2009.

\bibitem {BGL} J.\,Bang-Jensen, G.\,Gutin and H.\,Li,
 \textit{Sufficient conditions for a digraph to be Hamiltonian}, J. Graph Theory \textbf{22} (1996), 181--187.

\bibitem {B} C. Berge,
 ``Graphs and hypergraphs'', North-Holland, Amsterdam, 1973.

\end{thebibliography}
\end{document}